%
\documentclass[12pt, reqno]{amsart}
\usepackage{amsmath, amsthm, amscd, amsfonts, amssymb, graphicx, color}
\usepackage[bookmarksnumbered, colorlinks, plainpages]{hyperref}
\hypersetup{colorlinks=true,linkcolor=red, anchorcolor=green, citecolor=cyan, urlcolor=red, filecolor=magenta, pdftoolbar=true}

\textheight 22.5truecm \textwidth 14.5truecm
\setlength{\oddsidemargin}{0.35in}\setlength{\evensidemargin}{0.35in}

\setlength{\topmargin}{-.5cm}

\newtheorem{theorem}{Theorem}[section]
\newtheorem{lemma}[theorem]{Lemma}

\theoremstyle{definition}
\newtheorem{definition}[theorem]{Definition}
\newtheorem{example}[theorem]{Example}

\theoremstyle{remark}
\newtheorem{remark}[theorem]{Remark}
\numberwithin{equation}{section}

\begin{document}

\setcounter{page}{1}

\title[Banach's Principle in
Cone Modular Spaces]{Banach Contraction Principle in
Cone Modular Spaces \\ with Banach Algebra}

\author[M.\,\"{O}zav\c{s}ar \MakeLowercase{and} H. \c{C}ay]{Muttalip \"{O}zav\c{s}ar$^1$ \MakeLowercase{and} Hatice \c{C}ay$^2$}

\address{$^{1}$Department of Mathematics, Yildiz Technical University, Istanbul, Turkey.}
\email{\textcolor[rgb]{0.00,0.00,0.84}{mozavsar@yildiz.edu.tr}}

\address{$^{2}$IMU Vocational School, Istanbul Medipol University, Istanbul, Turkey.}
\email{\textcolor[rgb]{0.00,0.00,0.84}{hcay@medipol.edu.tr}}


\subjclass[2010]{Primary 47H10; Secondary 54H25.}

\keywords{Modular Space, Banach Algebra, Fixed Point Theorem, $\Delta_2$-condition, F-norm, $C^{\ast }$-algebra.}

\date{Received: xxxxxx; Revised: yyyyyy; Accepted: zzzzzz.
\newline \indent $^{*}$Corresponding author}

\begin{abstract}
Our aim in this paper is to present a new type of the modular space. This space contains the classical modular space. There are some mappings that do not have contractive condition in the usual modular space but become contraction in this new space.
\end{abstract} \maketitle

\section{\textbf{Introduction}}
In 1922, Banach presented a fixed point theorem known as Banach Contraction Principle (BCP) that is one of the important mathematical tools in nonlinear analysis. Then many authors dealth with this theorem in different spaces. For example, in 2014, Ma et al. \cite{Ref6} presented this theorem in $C^{\ast }$-algebra-valued metric space and claimed that this is a generalization of BCP in the standart metric space. But later, in 2016 Alsulami et al. \cite{Ref7}, Kadelburg and Radenovi\'{c} \cite{Ref8} separately showed that BCP obtained in $C^{\ast }$-algebra-valued metric space is equivalent to the result of BCP in the classical metric space.

In 1950, Nakano introduced the notion of modular space \cite{RefNA}. Then Musielak and Orlicz \cite{Ref5}, \cite{RefMUS} generalized the modular space. By using the results of \cite{RefMUS}, \cite{Ref5} Khamsi et al. \cite{Ref3} extended BCP to the frame of modular function space, an example of modular space, introduced by Kozlowski \cite{Ref2}. Inspired by the notion of $C^{\ast }$-algebra-valued metric space \cite{Ref6}, Shateri \cite{Ref1} presented a generalization for modular space.

Now in this work motivated by \cite{Ref7} and \cite{Ref8}, we firstly show that BCP in the setting of $C^{\ast }$-algebra-valued modular space does not provide a real extension for the BCP in the modular space \cite{Ref1}. Secondly, we introduce a new setting, namely, a cone modular space over Banach algebra, which enables us to obtain a proper generalization for BCP in the usual modular spaces. Finally, we conclude our work with an example. 

\subsection{Preliminaries}
Modular functional is defined as follows: \\
Let $\mathcal{V}$ be a vector space and $\varrho :\mathcal{V} \rightarrow \left[
0,\infty \right] $ be a functional for $x,y\in \mathcal{V}$. $\theta_{\mathcal{V}}$ represents the zero vector of $\mathcal{V}$. $\varrho $ is called modular if the followings hold:

m1.) $\varrho \left( x\right) =0$ if and only if $x=\theta_{\mathcal{V}}$.

m2.) $\varrho \left( \mu x\right) =\varrho \left( x\right) $ for each scalar with 
$\left\vert \mu \right\vert =1$.

m3.) $\varrho \left( \mu x+\alpha y\right) \leq \varrho \left( x\right) +\varrho
\left( y\right) $ if $\mu =1- \alpha$ for $\mu ,\alpha \geq 0.$

It is clear that the set $$\mathcal{V}_{\varrho }=\left\{ x\in \mathcal{V}:\varrho \left( \lambda x\right) \rightarrow 0 \
 \text{as} \ \lambda \rightarrow 0\right\}$$ is a vector subspace of $\mathcal{V}$. $\mathcal{V}_{\varrho }$ is called modular space.  

In addition to the conditions above, if $\varrho \left( \mu x+\alpha y\right) \leq \mu \varrho \left( x\right)
+\alpha \varrho \left( y\right) $ for $\mu ,\alpha \geq 0,\mu =1- \alpha$, then the functional $\varrho$ is called convex.
\begin{definition}
The modular $\varrho $ satisfies the $\Delta _{2}$-condition if $\lim_{n\rightarrow \infty }\varrho \left( 2x_{n}\right) =\theta _{\mathcal{V}}$  whenever  $\lim_{n\rightarrow \infty }\varrho \left( x_{n}\right) =\theta _{\mathcal{V}}$.
\end{definition}
We see from \cite{Ref3} that the BCP is valid for a mapping $T:M\rightarrow M$  where $M$ is a closed, bounded non-empty subset of modular function space:
\begin{theorem} \label{khamsi}
Let $p$ be a modular functional that satisfies the $\Delta_{2}$-condition and $M$ be a non-empty $p$-closed subset of the modular function space $\mathcal{V}_{p}$. If $T:M\rightarrow M$ is Lipschitzian and $M$ is $p$-bounded, then T has a unique fixed point.
\end{theorem}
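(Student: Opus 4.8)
The plan is to run the classical Picard iteration argument, with the $\Delta_2$-condition compensating for the absence of homogeneity and of a genuine triangle inequality for $p$.

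First I would fix $x_0\in M$ and set $x_{n+1}=Tx_n$; since $T(M)\subseteq M$, the whole orbit $\{x_n\}$ stays in $M$. Denoting by $k\in[0,1)$ the Lipschitz constant of $T$, the contractive condition gives $p(x_{n+1}-x_n)=p(Tx_n-Tx_{n-1})\le k\,p(x_n-x_{n-1})$, which iterates to $p(x_{n+1}-x_n)\le k^{\,n}\,p(x_1-x_0)$. This is where $p$-boundedness of $M$ is used: $p(x_1-x_0)\le\delta_p(M)<\infty$, so $p(x_{n+1}-x_n)\to 0$ as $n\to\infty$.

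The heart of the argument is to promote this to the statement that $\{x_n\}$ is $p$-Cauchy, i.e.\ $p(x_m-x_n)\to 0$ as $m,n\to\infty$; this is the step I expect to be the main obstacle. The difficulty is that m3 only yields inequalities of the form $p\!\left(\sum_i\lambda_i y_i\right)\le\sum_i p(y_i)$ for nonnegative weights with $\sum_i\lambda_i=1$, so decomposing the telescoping difference $x_m-x_n=\sum_{j=n}^{m-1}(x_{j+1}-x_j)$ unavoidably introduces dilated increments $p\!\left(\lambda_j^{-1}(x_{j+1}-x_j)\right)$. Controlling these is exactly the role of the $\Delta_2$-condition: it lets one bound $p(2^{\ell}y)$, hence $p(\lambda^{-1}y)$, in terms of $p(y)$, so that with the weights $\lambda_j$ chosen to decay geometrically and balanced against the geometric decay $k^{\,j}$ of $p(x_{j+1}-x_j)$, the dilated increments remain summable and their total tends to $0$. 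Equivalently, under $\Delta_2$ the modular $p$ generates an $F$-norm on $\mathcal V_p$ for which $p$-convergence coincides with $F$-norm convergence; since $p(x_{n+1}-x_n)\le k^{\,n}p(x_1-x_0)$ then forces the $F$-distances $\|x_{n+1}-x_n\|$ to decay geometrically as well, the genuine triangle inequality of that metric immediately gives the Cauchy property, and this is how I would actually organize the computation.

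Once $\{x_n\}$ is $p$-Cauchy, $p$-completeness of the modular function space yields a limit, and $p$-closedness of $M$ places it in $M$; call it $x^\ast$. To check $Tx^\ast=x^\ast$, write $Tx^\ast-x^\ast=(Tx^\ast-x_{n+1})+(x_{n+1}-x^\ast)$ and apply m3 with equal weights to the $2$-dilated summands to get $p(Tx^\ast-x^\ast)\le p\!\left(2(Tx^\ast-Tx_n)\right)+p\!\left(2(x_{n+1}-x^\ast)\right)$. Since $p(Tx^\ast-Tx_n)\le k\,p(x^\ast-x_n)\to0$ and $p(x_{n+1}-x^\ast)\to0$, the $\Delta_2$-condition makes both terms on the right tend to $0$; as the left-hand side is independent of $n$, it must vanish, so $Tx^\ast=x^\ast$ by m1. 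Finally, uniqueness is immediate: if $x^\ast$ and $y^\ast$ are both fixed points, they lie in the $p$-bounded set $M$, so $p(x^\ast-y^\ast)<\infty$, and $p(x^\ast-y^\ast)=p(Tx^\ast-Ty^\ast)\le k\,p(x^\ast-y^\ast)$ with $k<1$ forces $p(x^\ast-y^\ast)=0$, i.e.\ $x^\ast=y^\ast$.
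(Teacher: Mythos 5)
The paper does not actually prove this statement; it is quoted from Khamsi--Kozlowski--Reich \cite{Ref3}, so your attempt has to be measured against the standard argument there. Your skeleton (Picard iteration, Cauchy property, passage to the limit, verification of the fixed point, uniqueness) is the right shape, and the last three steps are fine. The gap is exactly where you predicted it: the Cauchy step. Both routes you sketch need a \emph{quantitative} form of the $\Delta_2$-condition (something like $p(2x)\preceq K\,p(x)$ for a fixed constant $K$), whereas the hypothesis here is only the sequential, qualitative version: $p(y_n)\to 0$ implies $p(2y_n)\to 0$. With only that, the estimate $p(x_{n+1}-x_n)\le k^{\,n}p(x_1-x_0)$ tells you $\|x_{n+1}-x_n\|_{F}\to 0$ but gives no rate whatsoever, so you cannot conclude that the $F$-distances decay geometrically, nor even that they are summable, and the triangle-inequality argument stalls. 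The telescoping route has the same problem: to control $p(\lambda_j^{-1}y_j)$ by a multiple of $p(y_j)$ you again need a quantitative doubling constant that the stated $\Delta_2$-condition does not supply.

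The missing idea is that the $p$-boundedness of $M$ is meant to be applied to the whole orbit, not merely to bound $p(x_1-x_0)$. For $n>m$ write $x_n-x_m=T^{m}(T^{\,n-m}x_0)-T^{m}(x_0)$; since $T^{\,n-m}x_0$ and $x_0$ both lie in $M$, iterating the contractive inequality $m$ times gives $p(x_n-x_m)\le k^{\,m}\,p(T^{\,n-m}x_0-x_0)\le k^{\,m}\,\delta_p(M)$, where $\delta_p(M)=\sup\{p(u-v):u,v\in M\}<\infty$ by $p$-boundedness. This yields the $p$-Cauchy property in one line, with no triangle inequality, no telescoping, and no quantitative $\Delta_2$; the $\Delta_2$-condition is then needed only where you already invoke it, namely to absorb the factors of $2$ in the verification that $Tx^{\ast}=x^{\ast}$ and to identify $p$-convergence with norm convergence when completeness is used. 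The fact that your draft uses boundedness only once, to bound $p(x_1-x_0)$, is the tell-tale sign that this hypothesis was not being exploited as intended.
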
 

Now before giving our first result let recall some basic definitions and results from \cite{Ref9} and \cite{Ref6}.  
An algebra is unital if it has the multiplicative unit. An involution on a unital algebra $\mathcal{C}$ is a conjugate-linear map $a\rightarrow a^{\ast }$ on $\mathcal{C}$ such that $a^{\ast \ast }=a$ and $\left( ab\right) ^{\ast
}=b^{\ast }a^{\ast }$ for all $a,b\in \mathcal{C}.$ $\left(\mathcal{C},\ast\right)$ is said to be a $\ast$-algebra. A Banach $\ast$-algebra is a $\ast$-algebra with a complete submultiplicative norm such that $\left\Vert a^{\ast }\right\Vert_{\mathcal{C}}
=\left\Vert a\right\Vert_{\mathcal{C}}$ for each element $a$ of it. A $C^{\ast }$-algebra is a Banach $\ast $-algebra such that $\left\Vert a^{\ast }a\right\Vert_{\mathcal{C}}
=\left\Vert a\right\Vert_{\mathcal{C}} ^{2}$ for every element $a$ of it. In the rest we suppose that $\mathcal{C}$ is a unital $C^{\ast }$-algebra. $\sigma\left( x\right)$ stands for the spectrum of $x$. $\theta_{\mathcal{C}}$ represents the zero element of $\mathcal{C}$. The set $\mathcal{C}^{\#}=\{x\in \mathcal{C}:x^{\ast }=x\}$ denotes the hermitian or self-adjoint elements of $\mathcal{C}$. If $x\in \mathcal{C}^{\#}$ and $\sigma \left( x\right) \subset \left[ 0,\infty \right) $, then $x\in \mathcal{C}$ is said to be a positive element of $\mathcal{C}$. $\mathcal{C}^{+}$ denotes the positive elements of $\mathcal{C}$ and $\left\vert x\right\vert =\left( x^{\ast }x\right) ^{\frac{1}{2}}$. Thus a partial ordering $\preceq $ on $\mathcal{C}^{\#}$
is defined as $x\preceq y$ iff $y-x\in \mathcal{C}^{+}.$ 
Now let's recall the following theorem that will be used later

\begin{theorem} \label{theor1}
The following conditions hold for $\mathcal{C}$:

i) There is a unique element $b\in \mathcal{C}^{+}$ such that $b^{2}=a$ for $a\in \mathcal{C}^{+}$.

ii) The set $\mathcal{C}^{+}$ is equal to $\left\{ a^{\ast }a:a\in \mathcal{C}\right\} .$

iii) If $a,b\in\mathcal{C}^{\#}$ and $\theta_{\mathcal{C}}\preceq a\preceq b,$ then $\left\Vert
a\right\Vert_{\mathcal{C}} \preceq \left\Vert b\right\Vert_{\mathcal{C}} .$

iv) If $a,b\in\mathcal{C}^{\#}$, $c\in \mathcal{C}$ and $a\preceq b$, then $c^{\ast }ac\preceq c^{\ast }bc$.
\end{theorem}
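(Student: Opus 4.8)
The plan is to treat Theorem~\ref{theor1} as the standard package of structural facts about a unital $C^{*}$-algebra and to derive all four items from the continuous functional calculus, together with the two spectral facts that for self-adjoint $a\in\mathcal{C}^{\#}$ one has $\|a\|_{\mathcal{C}}=\sup\{|t|:t\in\sigma(a)\}$, and that $a\in\mathcal{C}^{\#}$ lies in $\mathcal{C}^{+}$ exactly when $\sigma(a)\subseteq[0,\infty)$. The only heavy input, available from \cite{Ref9}, is the Gelfand--Naimark fact that for a normal element $x$ the unital $C^{*}$-subalgebra generated by $x$ is isometrically $*$-isomorphic to $C(\sigma(x))$, so that continuous real functions of $x$ may be formed and combined. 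I will also use freely that $\mathcal{C}^{+}$ is a (closed) cone, so that sums of positive elements are positive and $\preceq$ is transitive.

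For (i), applying the functional calculus to $a\in\mathcal{C}^{+}$ with the function $t\mapsto\sqrt{t}$, which is continuous on $\sigma(a)\subseteq[0,\infty)$, produces an element $b$ with $b=b^{*}$, $\sigma(b)=\{\sqrt{t}:t\in\sigma(a)\}\subseteq[0,\infty)$ (hence $b\in\mathcal{C}^{+}$) and $b^{2}=a$. For uniqueness, if $c\in\mathcal{C}^{+}$ also satisfies $c^{2}=a$, then $c$ commutes with $c^{2}=a$ and therefore with every norm-limit of polynomials in $a$, in particular with $b$; passing to the commutative unital $C^{*}$-algebra generated by $\{b,c\}$ and identifying it with some $C(K)$, both $b$ and $c$ are the pointwise nonnegative square root of the image of $a$, whence $b=c$.

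Part (ii) is the substantive one. The inclusion $\mathcal{C}^{+}\subseteq\{a^{*}a:a\in\mathcal{C}\}$ is immediate from (i), since $a=b^{2}=b^{*}b$. For the reverse inclusion I would fix $c\in\mathcal{C}$, observe $c^{*}c\in\mathcal{C}^{\#}$, and split it via functional calculus as $c^{*}c=u-v$ with $u,v\in\mathcal{C}^{+}$ and $uv=vu=\theta_{\mathcal{C}}$ (its positive and negative parts). Putting $d:=cv$ and using $uv=vu=\theta_{\mathcal{C}}$, one gets $d^{*}d=v(u-v)v=-v^{3}$, so $\sigma(d^{*}d)\subseteq(-\infty,0]$. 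Writing $d=h+ik$ with $h,k\in\mathcal{C}^{\#}$ yields $d^{*}d+dd^{*}=2h^{2}+2k^{2}$, hence $dd^{*}=2h^{2}+2k^{2}+v^{3}$; since $h^{2},k^{2},v^{3}\in\mathcal{C}^{+}$ and $\mathcal{C}^{+}$ is a cone, $dd^{*}\in\mathcal{C}^{+}$, i.e. $\sigma(dd^{*})\subseteq[0,\infty)$. Combining with the spectral permanence identity $\sigma(dd^{*})\cup\{0\}=\sigma(d^{*}d)\cup\{0\}\subseteq(-\infty,0]$ forces $\sigma(dd^{*})=\{0\}$, so $\|dd^{*}\|_{\mathcal{C}}=0$ and $dd^{*}=\theta_{\mathcal{C}}$; then $\|d\|_{\mathcal{C}}^{2}=\|dd^{*}\|_{\mathcal{C}}=0$, so $d=\theta_{\mathcal{C}}$ and $v^{3}=-d^{*}d=\theta_{\mathcal{C}}$, which gives $v=\theta_{\mathcal{C}}$. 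Hence $c^{*}c=u\in\mathcal{C}^{+}$. I expect this chain to be the main obstacle: it relies on the cone property of $\mathcal{C}^{+}$, the spectral permanence identity, and the careful extraction of the contradiction; everything around it is bookkeeping.

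Finally, (iii) and (iv) follow quickly. Since $\theta_{\mathcal{C}}\preceq a\preceq b$ gives $b\in\mathcal{C}^{+}$ by transitivity of $\preceq$, we have $\sigma(b)\subseteq[0,\|b\|_{\mathcal{C}}]$, so $\|b\|_{\mathcal{C}}1-b\in\mathcal{C}^{+}$, i.e. $b\preceq\|b\|_{\mathcal{C}}1$; with $a\preceq b$ this yields $a\preceq\|b\|_{\mathcal{C}}1$, hence $\sigma(a)\subseteq(-\infty,\|b\|_{\mathcal{C}}]$, and together with $\sigma(a)\subseteq[0,\infty)$ we obtain $\|a\|_{\mathcal{C}}=\sup\sigma(a)\leq\|b\|_{\mathcal{C}}$, which is (iii). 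For (iv), $a\preceq b$ means $b-a\in\mathcal{C}^{+}$, so by (i) $b-a=e^{2}$ with $e=e^{*}$, and then $c^{*}(b-a)c=(ec)^{*}(ec)\in\mathcal{C}^{+}$ by (ii); that is, $c^{*}ac\preceq c^{*}bc$.
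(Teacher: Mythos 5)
The paper does not prove Theorem \ref{theor1} at all: it is stated as a fact recalled from \cite{Ref9} (Murphy), so there is no in-paper argument to compare yours against. Your proof is essentially the standard one from that reference and is correct, granted the standard inputs you declare up front (continuous functional calculus, closedness of $\mathcal{C}^{+}$ under sums, $\left\Vert a\right\Vert_{\mathcal{C}}=\sup\{|t|:t\in\sigma(a)\}$ for self-adjoint $a$, and the spectral permanence identity $\sigma(xy)\cup\{0\}=\sigma(yx)\cup\{0\}$). The delicate part, positivity of $c^{\ast}c$ in (ii), is handled correctly: the decomposition $c^{\ast}c=u-v$, the computation $d^{\ast}d=-v^{3}$ for $d=cv$, the identity $d^{\ast}d+dd^{\ast}=2h^{2}+2k^{2}$, and the resulting $\sigma(dd^{\ast})=\{0\}$ together force $v=\theta_{\mathcal{C}}$. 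Two minor points: in (iv) you denote the square root of $b-a$ by $e$, which collides with the paper's use of $e$ for the multiplicative unit of $\mathcal{A}$, so a different letter would be safer; and the step $\left\Vert d\right\Vert_{\mathcal{C}}^{2}=\left\Vert dd^{\ast}\right\Vert_{\mathcal{C}}$ deserves the one-line justification $\left\Vert dd^{\ast}\right\Vert_{\mathcal{C}}=\left\Vert (d^{\ast})^{\ast}d^{\ast}\right\Vert_{\mathcal{C}}=\left\Vert d^{\ast}\right\Vert_{\mathcal{C}}^{2}=\left\Vert d\right\Vert_{\mathcal{C}}^{2}$. Neither affects correctness.
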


In \cite{Ref6}, Ma et al. introduced the notion of $C^{\ast }$-algebra-valued metric space and proved BCP in such spaces. Then motivated by the results obtained in \cite{Ref6}, Shateri presented the notion of $C^{\ast }$-algebra-valued modular space in \cite{Ref1} as follows:

\begin{definition}
Let $\mathcal{V}$ be a vector space over $\mathbf{K}$. The functional $\rho :\mathcal{V}\rightarrow \mathcal{C}$ called $C^{\ast }$-algebra-valued modular if the followings hold: 

cm1) $\rho \left( x\right) \succeq \theta _{\mathcal{C}}$ and $\rho \left( x\right)
=\theta _{\mathcal{C}}$ if and only if $x=\theta_{\mathcal{V}}$.

cm2) $\rho \left( \alpha x\right) =\rho \left( x\right) $ for each $\alpha
\in \mathbf{K}$ with $\left\vert \alpha \right\vert =1$.

cm3) $\rho \left( \alpha x+\beta y\right) \preceq \rho \left( x\right) +\rho
\left( y\right) $ if $\alpha ,\beta \geq 0$ and $\alpha =1-\beta ,$ for arbitrary $x,y\in \mathcal{V}.$
\end{definition}
\noindent Note that the subset
 $$\mathcal{V}_{\rho }=\left\{ x\in \mathcal{V}:\lim_{\lambda \rightarrow 0}\rho \left( \lambda
x\right) =\theta _{\mathcal{C}}\right\}$$ is a subspace of $\mathcal{V}$, and $\mathcal{V}_{\rho }$ is called $C^{\ast }$-algebra-valued modular space.

\begin{definition} \label{def3}
Let $\mathcal{V}_{\rho }$ be a $C^{\ast }$-algebra-valued modular space. Then a mapping $T:\mathcal{V}_{\rho }\rightarrow \mathcal{V}_{\rho }$ is called a $C^{\ast }$-algebra-valued contractive mapping on $\mathcal{V}_{\rho }$ if there is $k\in \mathcal{C}$ with $\Vert k\Vert <1$ and $\alpha
,\beta \in 
\mathbb{R}
^{+}$ with $\alpha >\beta $ such that 
$$\rho \left( \alpha \left( Tx-Ty\right) \right) \preceq k^*\rho \left( \beta
\left( x-y\right) \right )k$$
for all $x,y\in \mathcal{V}$.
\end{definition}

In \cite{Ref1}, Shateri gives definitions of $\rho $-convergence, $\Delta_2$-condition, $\rho $-Cauchy and $\rho $-completeness in accordance with literature and introduces the following theorem:

\begin{theorem} \label{shateri}
Suppose that $\mathcal{V}_\rho $ is a $\rho $-complete modular space with the $\Delta_2$-condition and $T$
is a $C^{\ast }$-algebra-valued contractive mapping on $\mathcal{V}_\rho .$ Then $T$
has a unique fixed point in $\mathcal{V}_{\rho }.$
\end{theorem}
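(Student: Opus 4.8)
The plan is to mimic the classical Banach iteration argument, but carried out in the $C^{\ast}$-algebra-valued setting, being careful that the contraction constant $k$ acts on both sides via $k^{\ast}(\cdot)k$ rather than as a scalar. Fix $x_{0}\in\mathcal{V}_{\rho}$ and set $x_{n}=T^{n}x_{0}$. First I would apply the contractive condition from Definition \ref{def3} repeatedly: writing $\alpha>\beta>0$, one gets
\[
\rho\bigl(\alpha(x_{n+1}-x_{n})\bigr)\preceq k^{\ast}\rho\bigl(\beta(x_{n}-x_{n-1})\bigr)k.
\]
The trick is to relate $\rho(\beta(x_{n}-x_{n-1}))$ back to $\rho(\alpha(x_{n}-x_{n-1}))$. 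Since $0<\beta<\alpha$, convexity-type property cm3 (or the $\Delta_{2}$-type scaling) gives a bound of the form $\rho(\beta z)\preceq\rho(\alpha z)$ when $\beta\le\alpha$; more precisely $\beta z=\tfrac{\beta}{\alpha}(\alpha z)+(1-\tfrac{\beta}{\alpha})\theta_{\mathcal{V}}$, so cm3 yields $\rho(\beta z)\preceq\rho(\alpha z)+\rho(\theta_{\mathcal{V}})=\rho(\alpha z)$. Iterating, I obtain
\[
\rho\bigl(\alpha(x_{n+1}-x_{n})\bigr)\preceq (k^{\ast})^{n}\,\rho\bigl(\alpha(x_{1}-x_{0})\bigr)\,k^{n},
\]
and hence by Theorem \ref{theor1}(iii) the norm estimate $\|\rho(\alpha(x_{n+1}-x_{n}))\|_{\mathcal{C}}\le\|k\|^{2n}\,\|\rho(\alpha(x_{1}-x_{0}))\|_{\mathcal{C}}\to 0$ geometrically.

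Next I would show $\{x_{n}\}$ is $\rho$-Cauchy. For $m>n$, a telescoping application of cm3 distributes $\alpha(x_{m}-x_{n})$ as a convex combination over the consecutive differences $\alpha(x_{j+1}-x_{j})$ (after an appropriate rescaling, using the $\Delta_{2}$-condition to absorb the factors of $2$ introduced when splitting into two pieces at a time); this is exactly where the $\Delta_{2}$-condition is needed. Summing the resulting geometric series of norms gives $\|\rho(\alpha(x_{m}-x_{n}))\|_{\mathcal{C}}\to 0$, so $\{x_{n}\}$ is $\rho$-Cauchy. By $\rho$-completeness there is $x^{\ast}\in\mathcal{V}_{\rho}$ with $x_{n}\xrightarrow{\rho}x^{\ast}$.

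To see $x^{\ast}$ is a fixed point, I would estimate $\rho(\alpha'(Tx^{\ast}-x^{\ast}))$ for a suitable scalar by splitting via cm3 through the midpoint $x_{n+1}=Tx_{n}$: one piece is $\rho(\cdot(Tx^{\ast}-Tx_{n}))\preceq k^{\ast}\rho(\cdot(x^{\ast}-x_{n}))k$, whose norm tends to $0$ by $\rho$-convergence of $x_{n}$ and Theorem \ref{theor1}(iii), and the other piece is $\rho(\cdot(x_{n+1}-x^{\ast}))\to\theta_{\mathcal{C}}$, again using $\Delta_{2}$ to handle the scaling constant. Hence $\rho(\alpha'(Tx^{\ast}-x^{\ast}))=\theta_{\mathcal{C}}$, and cm1 forces $Tx^{\ast}=x^{\ast}$. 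For uniqueness, if $Tx^{\ast}=x^{\ast}$ and $Ty^{\ast}=y^{\ast}$ then $\rho(\alpha(x^{\ast}-y^{\ast}))\preceq k^{\ast}\rho(\beta(x^{\ast}-y^{\ast}))k\preceq k^{\ast}\rho(\alpha(x^{\ast}-y^{\ast}))k$; taking norms gives $\|\rho(\alpha(x^{\ast}-y^{\ast}))\|_{\mathcal{C}}\le\|k\|^{2}\|\rho(\alpha(x^{\ast}-y^{\ast}))\|_{\mathcal{C}}$ with $\|k\|^{2}<1$, so $\rho(\alpha(x^{\ast}-y^{\ast}))=\theta_{\mathcal{C}}$ and $x^{\ast}=y^{\ast}$.

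The main obstacle I anticipate is the Cauchy step: unlike the scalar case, one cannot simply sum a geometric series of algebra elements, so everything has to be routed through the norm using Theorem \ref{theor1}(iii), and the convexity inequality cm3 only lets one split a convex combination of \emph{two} terms at a time. Repeatedly halving introduces factors of $2$ inside $\rho$, and it is precisely the $\Delta_{2}$-condition that lets those factors be controlled so that the norm bounds still form a convergent geometric-type series. Care is also needed because $\rho$ takes values in $[0,\infty]$-like fashion (it may be $+\infty$), so one must stay inside $\mathcal{V}_{\rho}$ where the relevant quantities are finite; this is automatic once $x_{0}\in\mathcal{V}_{\rho}$ and $T$ maps $\mathcal{V}_{\rho}$ into itself.
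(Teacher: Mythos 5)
Your strategy (a direct Picard iteration carried out inside the $C^{\ast}$-algebra) is not the route the paper takes. The paper never iterates in $\mathcal{C}$: it shows that $F(x)=\Vert\rho(x)\Vert_{\mathcal{C}}$ is an ordinary modular and that the condition of Definition \ref{def3} collapses, via Theorem \ref{theor1}(ii)--(iii), to the scalar inequality $F(\alpha(Tx-Ty))\leq\Vert k\Vert_{\mathcal{C}}^{2}\,F(\beta(x-y))$ with $\Vert k\Vert_{\mathcal{C}}^{2}<1$, so that Theorem \ref{shateri} reduces to the classical modular-space result (Theorem \ref{khamsi}). That reduction sidesteps exactly the step where your argument has a genuine gap.

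The gap is the Cauchy step. You propose to telescope $x_{m}-x_{n}$ by repeated halving through cm3 and to let the $\Delta_{2}$-condition ``absorb the factors of $2$.'' This cannot work as described: splitting two terms at a time produces arguments of the form $\rho\bigl(2^{j}\gamma(x_{i+1}-x_{i})\bigr)$ in which the exponent $j$ grows without bound as $m-n\to\infty$ (linearly if you peel off one difference at a time, like $\log_{2}(m-n)$ for a balanced split). The $\Delta_{2}$-condition is a purely qualitative statement about a \emph{single} doubling along a null sequence; it supplies no uniform constant $K$ with $\rho(2x)\preceq K\rho(x)$, and even if such a $K$ existed, iterating would introduce a factor $K^{j}$ capable of overwhelming the geometric decay $\Vert k\Vert^{2i}$. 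The mechanism that actually makes the Cauchy estimate close --- and the reason the hypothesis $\alpha>\beta$ appears in Definition \ref{def3} at all --- is the \emph{asymmetric} convex split: choose $\alpha_{0}$ with $\frac{\beta}{\alpha}+\frac{1}{\alpha_{0}}=1$ and write $\beta(u+v)=\frac{\beta}{\alpha}(\alpha u)+\frac{1}{\alpha_{0}}(\alpha_{0}\beta v)$, so cm3 yields $\rho(\beta(x_{n+1}-x_{m}))\preceq\rho(\alpha(x_{n+1}-x_{m+1}))+\rho(\alpha_{0}\beta(x_{m+1}-x_{m}))$ with no factor of $2$; the first summand is then contracted again by $k^{\ast}(\cdot)k$ and the recursion produces a bona fide geometric series $\sum_{i}\Vert k\Vert^{2(m+i)}$. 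This is precisely how the paper argues in the analogous cone setting (Theorem \ref{conethm}). Without this device (or some substitute such as the $\rho$-boundedness of the orbit used in Theorem \ref{khamsi}), your Cauchy step cannot be completed as written. The remaining ingredients of your proposal --- the estimate $\Vert(k^{\ast})^{n}a\,k^{n}\Vert_{\mathcal{C}}\leq\Vert k\Vert_{\mathcal{C}}^{2n}\Vert a\Vert_{\mathcal{C}}$ via Theorem \ref{theor1}(iii)--(iv), and the fixed-point and uniqueness arguments --- are sound.
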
 Now we recall the definition of the Banach algebra and some associated properties from \cite{Ref13,Ref14} that will be necessary for our results.
\begin{definition} Let $\mathcal{A}$ be a Banach space over $\mathbf{K}\in \left\{\mathbb{R},\mathbb{C}\right\}$ and $\left\Vert . \right\Vert_{\mathcal{A}} $ be a norm on $\mathcal{A}$. $\mathcal{A}$ is said to be a Banach algebra if there is an
operation of multiplication satisfying the following conditions:

\begin{enumerate}
\item $\left( u+v\right) w=uw+vw$ and $u\left( v+w\right) =uv+uw$.

\item $\left( uv\right) w=u\left( vw\right) $.

\item $\beta \left( uv\right) =\left( \beta u\right) v=u\left( \beta
v\right) $.

\item $\left\Vert uv\right\Vert_{\mathcal{A}} \leq \left\Vert u\right\Vert_{\mathcal{A}} \left\Vert
v\right\Vert_{\mathcal{A}} $.
\end{enumerate}
for all $u,v,w\in \mathcal{A}$ and $
\beta \in \mathbf{K} $. If there is an element $e\in  \mathcal{A}$ such that $ae=ea=a$ for all $a\in \mathcal{A}$, then $e$ is called the multiplicative unit of the Banach algebra $\mathcal{A}$. An element $a\in \mathcal{A}$ is called invertible if there is $a^{-1}\in \mathcal{A}$ such that $aa^{-1} =a^{-1}a=e$. From now on, we suppose that $\mathcal{A}$ is a Banach algebra with the multiplicative unit $e$ and zero vector $\theta_{\mathcal{A}}$.
\end{definition}

\begin{definition} Let $P\subset \mathcal{A}$, then $P$ is called cone if the followings hold:

\begin{enumerate}
\item $\left\{ e,\theta_{\mathcal{A}} \right\} \subset P$.

\item $\mu P+\beta P\subset P$ where all $\mu ,\beta $ are non-negative real
numbers.

\item $PP=P^{2}\subset P$.

\item $P\cap \left( -P\right) =\left\{ \theta_{\mathcal{A}} \right\} .$
\end{enumerate}
\end{definition}
A partial ordering $\preceq $ on $\mathcal{A}$ is defined as $u\preceq v$ if and only if $v-u\in
P$. $u\prec v$ stands for $u\preceq v$ and $u\neq v.$ $intP$ denotes the
interior of $P$. $u\ll v$ represents $v-u\in intP$. $P$ is said to be a
solid cone if $intP\neq \emptyset $.  The cone $P$ is said to be normal if there exists $L>0$ such that for all $x,y\in \mathcal{A}$, $\theta_{\mathcal{A}}\preceq x\preceq y$ implies $\Vert x\Vert_{\mathcal{A}}\leq L \Vert y\Vert_{\mathcal{A}}$. From now on, $P$ denotes a normal solid cone of $\mathcal{A}$ unless otherwise stated.
\begin{definition} Let $X$ be a nonempty set and  $d:X\times X\rightarrow \mathcal{A}$ be a mapping holding the following conditions: 
\begin{enumerate}
\item $\theta_{\mathcal{A}} \preceq d\left( u,v\right) $ for all $u,v\in X$ and $d\left(
u,v\right) =\theta_{\mathcal{A}} $ if and only if $u=v$.

\item $d\left( u,v\right) =d\left( v,u\right) $ for all $u,v\in X$.

\item $d\left( u,w \right) \preceq d\left( u,v\right) +d\left( v,w
\right) $ for all $u,v,w \in X$.
\end{enumerate}
Then $\left( X,d\right) $ is said to be a cone metric space over $\mathcal{A}$.
\end{definition}

BCP in such spaces is introduced by Liu and Xu \cite{Ref14} as follows:
\begin{theorem} Let $\left( X,d\right) $ be a cone metric space over $\mathcal{A}$ and $P$ be a normal solid cone of $\mathcal{A}$ where $a\in P$ with $r\left(
a\right)<1$. If the mapping $T:X\rightarrow X$ holds following condition for all $x,y \in X$, then it has a unique fixed point in $X$:
$$ d\left( Tx,Ty\right)\preceq ad\left( x, y\right).$$
\end{theorem}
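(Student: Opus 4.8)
The plan is to mimic the classical Banach iteration argument, but carried out in the ordered Banach algebra $\mathcal{A}$ rather than in $\mathbb{R}$. Fix an arbitrary $x_0\in X$ and define the Picard sequence $x_{n+1}=Tx_n$. Applying the contractive condition repeatedly gives $d(x_{n+1},x_n)\preceq a^n d(x_1,x_0)$, where $a^n$ makes sense because $\mathcal{A}$ is an algebra. For $m>n$, the cone version of the triangle inequality (property (3) of the cone metric) together with the additivity of $P$ yields
\begin{equation*}
d(x_m,x_n)\preceq \Bigl(\sum_{k=n}^{m-1}a^k\Bigr)d(x_1,x_0)\preceq \Bigl(\sum_{k\ge n}a^k\Bigr)d(x_1,x_0).
\end{equation*}
The point where the hypothesis $r(a)<1$ enters is exactly here: since the spectral radius of $a$ is strictly less than $1$, the Neumann series $\sum_{k\ge 0}a^k$ converges in the Banach algebra to $(e-a)^{-1}$, and the tail $\sum_{k\ge n}a^k=a^n(e-a)^{-1}$ satisfies $\|a^n(e-a)^{-1}\|_{\mathcal{A}}\le \|a^n\|_{\mathcal{A}}\,\|(e-a)^{-1}\|_{\mathcal{A}}\to 0$ as $n\to\infty$, using Gelfand's formula $r(a)=\lim\|a^n\|^{1/n}$.

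Next I would pass from this algebra estimate to a genuine Cauchy estimate in norm by invoking normality of the cone $P$: from $\theta_{\mathcal{A}}\preceq d(x_m,x_n)\preceq a^n(e-a)^{-1}d(x_1,x_0)$ we get $\|d(x_m,x_n)\|_{\mathcal{A}}\le L\,\|a^n(e-a)^{-1}d(x_1,x_0)\|_{\mathcal{A}}\to 0$, so $(x_n)$ is Cauchy in $X$. One subtlety to address is whether a cone metric space over $\mathcal{A}$ carries a completeness hypothesis; I would either assume $(X,d)$ is complete (the natural hypothesis, presumably intended) so that $x_n\to x^\ast$ for some $x^\ast\in X$, or note that completeness is part of the standing conventions. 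Having the limit, I would show $Tx^\ast=x^\ast$ by estimating $d(Tx^\ast,x^\ast)\preceq d(Tx^\ast,Tx_n)+d(x_{n+1},x^\ast)\preceq a\,d(x^\ast,x_n)+d(x_{n+1},x^\ast)$; applying normality, the right side has norm tending to $0$, hence $\|d(Tx^\ast,x^\ast)\|_{\mathcal{A}}=0$, so $d(Tx^\ast,x^\ast)=\theta_{\mathcal{A}}$ and $Tx^\ast=x^\ast$ by axiom (1).

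For uniqueness, suppose $x^\ast$ and $y^\ast$ are both fixed points. Then $d(x^\ast,y^\ast)=d(Tx^\ast,Ty^\ast)\preceq a\,d(x^\ast,y^\ast)$, which iterates to $d(x^\ast,y^\ast)\preceq a^n d(x^\ast,y^\ast)$ for every $n$; since $\theta_{\mathcal{A}}\preceq d(x^\ast,y^\ast)\preceq a^n d(x^\ast,y^\ast)$, normality gives $\|d(x^\ast,y^\ast)\|_{\mathcal{A}}\le L\|a^n\|_{\mathcal{A}}\|d(x^\ast,y^\ast)\|_{\mathcal{A}}$, and letting $n\to\infty$ forces $\|d(x^\ast,y^\ast)\|_{\mathcal{A}}=0$, i.e. $x^\ast=y^\ast$. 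The main obstacle I anticipate is not any single step but the bookkeeping around noncommutativity and the ordering: one must be careful that $a^k\preceq$-estimates can be summed (this needs $a\in P$ so that all partial sums lie in $P$ and the limit, being a norm limit of elements of the closed set $P$, stays in $P$), and that left-multiplication by $a\in P$ preserves $\preceq$ — i.e. $u\preceq v$ implies $au\preceq av$ — which follows from $a(v-u)\in PP\subseteq P$. Once these order-compatibility facts are isolated as preliminary observations, the rest is the standard contraction argument transported verbatim.
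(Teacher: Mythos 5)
Your argument is correct, and it is essentially the canonical one: note that the paper itself does not prove this statement but merely recalls it from Liu and Xu \cite{Ref14}, and the proof technique you describe (Picard iteration, order-compatibility of left multiplication by $a\in P$, the Neumann series $\sum a^k=(e-a)^{-1}$ via $r(a)<1$ and Lemma \ref{lem3}, then normality to convert the order estimate into a norm estimate) is exactly the strategy the paper deploys for its own Theorem \ref{conethm} in the cone modular setting. You were also right to flag the missing completeness hypothesis --- the statement as quoted omits it, and it is genuinely needed --- and to isolate the facts that $u\preceq v$ implies $au\preceq av$ for $a\in P$ and that tails of the Neumann series stay in $P$ (the latter uses closedness of $P$, which is standard though not listed among the paper's cone axioms).
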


After the announcement of this theorem, in \cite{Ref15}, Xu and Radenovi\'{c} showed that there is no need to normality condition to prove BCP mentioned above. However, we must note that as a generalization of the usual modular space, a cone modular space in this paper can be defined if $P$ holds the normality condition. 

\begin{lemma} \label{lem3}
The spectral radius $r\left( a\right) $ of $a\in \mathcal{A}$ holds
$$r\left( a\right) =\lim_{n\rightarrow \infty }\left\Vert a^{n}\right\Vert_{\mathcal{A}} ^{
\frac{1}{n}}=\inf \left\Vert a^{n}\right\Vert_{\mathcal{A}} ^{\frac{1}{n}}.$$
If $r\left( a\right) <1,$ then $e-a$ is invertible in $\mathcal{A}.$ Furthermore
$$\left( e-a\right)^{-1}=\sum\limits_{i=0}^{\infty }a^{i}.$$
\end{lemma}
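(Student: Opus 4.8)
The plan is to prove the three assertions in order. The identity $r(a)=\lim_{n\to\infty}\|a^{n}\|_{\mathcal{A}}^{1/n}=\inf_{n}\|a^{n}\|_{\mathcal{A}}^{1/n}$ is the Beurling--Gelfand spectral radius formula combined with Fekete's subadditivity lemma, and once it is in hand the invertibility of $e-a$ and the Neumann series representation follow quickly.

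First I would deal with the second equality, $\lim_{n}\|a^{n}\|_{\mathcal{A}}^{1/n}=\inf_{n}\|a^{n}\|_{\mathcal{A}}^{1/n}$. If $a^{m}=\theta_{\mathcal{A}}$ for some $m$, both sides vanish and there is nothing to prove; otherwise $\|a^{n}\|_{\mathcal{A}}>0$ for all $n$, and submultiplicativity of the norm gives $\|a^{m+n}\|_{\mathcal{A}}\le\|a^{m}\|_{\mathcal{A}}\|a^{n}\|_{\mathcal{A}}$, so the sequence $c_{n}:=\log\|a^{n}\|_{\mathcal{A}}$ is subadditive. Fekete's lemma then yields $\lim_{n}c_{n}/n=\inf_{n}c_{n}/n$, and exponentiating gives the claim; write $\ell$ for this common value.

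Next I would identify $\ell$ with $r(a)$. For $r(a)\le\ell$: for $|\lambda|>\|a\|_{\mathcal{A}}$ the series $\sum_{n\ge0}a^{n}/\lambda^{n+1}$ converges in $\mathcal{A}$ and equals the resolvent $(\lambda e-a)^{-1}$; since the resolvent is analytic on $\{\lambda:|\lambda|>r(a)\}$, composing with an arbitrary bounded linear functional and using uniqueness of Laurent expansions together with the uniform boundedness principle shows that $\{a^{n}/\lambda^{n}\}_{n}$ is bounded for each fixed $\lambda$ with $|\lambda|>r(a)$; hence $\ell=\limsup_{n}\|a^{n}\|_{\mathcal{A}}^{1/n}\le|\lambda|$, and letting $|\lambda|\downarrow r(a)$ gives $\ell\le r(a)$. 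For $r(a)\ge\ell$: the spectral mapping theorem for polynomials gives $\sigma(a^{n})=\{\lambda^{n}:\lambda\in\sigma(a)\}$, so $r(a)^{n}=r(a^{n})\le\|a^{n}\|_{\mathcal{A}}$, i.e. $r(a)\le\|a^{n}\|_{\mathcal{A}}^{1/n}$ for every $n$, whence $r(a)\le\inf_{n}\|a^{n}\|_{\mathcal{A}}^{1/n}=\ell$. Combining the two inequalities proves $r(a)=\ell$.

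Finally, suppose $r(a)<1$. By the formula just established there exist $\rho\in(r(a),1)$ and $N$ with $\|a^{n}\|_{\mathcal{A}}\le\rho^{n}$ for all $n\ge N$, so $\sum_{n\ge0}\|a^{n}\|_{\mathcal{A}}<\infty$; completeness of $\mathcal{A}$ then makes $s:=\sum_{n=0}^{\infty}a^{n}$ a well-defined element of $\mathcal{A}$. Telescoping the partial sums, $(e-a)\sum_{n=0}^{k}a^{n}=\sum_{n=0}^{k}a^{n}(e-a)=e-a^{k+1}$, and $a^{k+1}\to\theta_{\mathcal{A}}$ since $\|a^{k+1}\|_{\mathcal{A}}\le\rho^{k+1}\to0$; letting $k\to\infty$ gives $(e-a)s=s(e-a)=e$, so $e-a$ is invertible with $(e-a)^{-1}=s=\sum_{i=0}^{\infty}a^{i}$. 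I expect the only real obstacle to be the inequality $r(a)\le\ell$: it is the genuinely analytic ingredient, resting on the non-emptiness and compactness of $\sigma(a)$ and on a vector-valued Laurent-series and uniform-boundedness argument, while the Fekete step, the spectral mapping inclusion, and the Neumann-series manipulation are routine.
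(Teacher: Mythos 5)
The paper does not prove this lemma at all: it is recalled verbatim from the cited references (Rudin's \emph{Functional Analysis} and Liu--Xu), so there is no in-paper argument to compare against. Your proof is the standard Beurling--Gelfand argument found in exactly those references --- Fekete's lemma for the $\lim=\inf$ identity, the polynomial spectral mapping inclusion for $r(a)\le\inf_n\|a^n\|_{\mathcal{A}}^{1/n}$, the vector-valued Laurent/uniform-boundedness argument for the reverse inequality, and the telescoping Neumann series --- and it is correct. Two small blemishes: the headers of your two sub-arguments are swapped (the paragraph labelled ``$r(a)\le\ell$'' in fact establishes $\ell\le r(a)$ and vice versa, though together they still yield equality), and the analyticity/non-emptiness-of-spectrum step tacitly assumes $\mathbf{K}=\mathbb{C}$, whereas the paper allows real scalars; in the real case one would pass to the complexification, a point the paper also leaves implicit.
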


\section{\textbf{Main results}}
In the sequel we first show that BCP in $C^{\ast }$-algebra-valued modular spaces (see \cite{Ref1}) is equivalent to BCP in the usual modular spaces:

\begin{theorem}
BCP in the sense of Theorem \ref{shateri} is equivalent to one in the usual modular space.
\end{theorem}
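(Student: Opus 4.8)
The plan is to imitate, in the modular setting, the reduction that Alsulami et al.\ \cite{Ref7} and Kadelburg and Radenovi\'{c} \cite{Ref8} used for $C^{\ast}$-algebra-valued metric spaces: from a $C^{\ast}$-algebra-valued modular $\rho:\mathcal{V}\rightarrow\mathcal{C}$ we manufacture an ordinary modular on the \emph{same} vector space $\mathcal{V}$ by composing with the $C^{\ast}$-norm,
$$\varrho(x):=\Vert\rho(x)\Vert_{\mathcal{C}},\qquad x\in\mathcal{V}.$$
The first step is to verify that $\varrho$ satisfies m1)--m3). Property m1) follows from cm1) and the fact that a norm separates points; m2) is immediate from cm2). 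For m3), if $\mu=1-\alpha$ with $\mu,\alpha\ge 0$, then cm1) and cm3) give $\theta_{\mathcal{C}}\preceq\rho(\mu x+\alpha y)\preceq\rho(x)+\rho(y)$ with the two outer terms in $\mathcal{C}^{+}$, so Theorem \ref{theor1}(iii) and the triangle inequality in $\mathcal{C}$ yield $\varrho(\mu x+\alpha y)\le\Vert\rho(x)+\rho(y)\Vert_{\mathcal{C}}\le\varrho(x)+\varrho(y)$. (The same computation shows $\varrho$ is convex whenever $\rho$ is.)

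The second step is to observe that every auxiliary notion appearing in Theorem \ref{shateri} passes verbatim between $\rho$ and $\varrho$, the only input being the elementary fact that a sequence $(a_{n})$ in $\mathcal{C}$ satisfies $a_{n}\to\theta_{\mathcal{C}}$ if and only if $\Vert a_{n}\Vert_{\mathcal{C}}\to 0$. Reading this off with $a_{n}=\rho(\lambda x)$ (as $\lambda\to 0$), $a_{n}=\rho(x_{n})$, $a_{n}=\rho(2x_{n})$, $a_{n}=\rho(\lambda(x_{n}-x))$ and $a_{n}=\rho(\lambda(x_{n}-x_{m}))$ gives at once: $\mathcal{V}_{\rho}=\mathcal{V}_{\varrho}$ as sets; $\rho$ satisfies the $\Delta_{2}$-condition precisely when $\varrho$ does; a sequence is $\rho$-convergent (resp.\ $\rho$-Cauchy) precisely when it is $\varrho$-convergent (resp.\ $\varrho$-Cauchy); hence $\mathcal{V}_{\rho}$ is $\rho$-complete precisely when $\mathcal{V}_{\varrho}$ is $\varrho$-complete. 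So the underlying object, with all of its relevant structure, is literally unchanged; only the gauge describing it has changed.

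The third step translates the contraction and closes the argument. Let $T$ be $C^{\ast}$-algebra-valued contractive with constant $k$, $\Vert k\Vert_{\mathcal{C}}<1$, and parameters $\alpha>\beta\ge 0$. By Theorem \ref{theor1}(ii) every $a\in\mathcal{C}^{+}$ equals $b^{\ast}b$, hence $k^{\ast}ak=(bk)^{\ast}(bk)\in\mathcal{C}^{+}$; taking $a=\rho(\beta(x-y))$ and using cm1), Theorem \ref{theor1}(iii), submultiplicativity of the $C^{\ast}$-norm and $\Vert k^{\ast}\Vert_{\mathcal{C}}=\Vert k\Vert_{\mathcal{C}}$, we obtain
$$\varrho(\alpha(Tx-Ty))=\Vert\rho(\alpha(Tx-Ty))\Vert_{\mathcal{C}}\le\Vert k^{\ast}\rho(\beta(x-y))k\Vert_{\mathcal{C}}\le\Vert k\Vert_{\mathcal{C}}^{2}\,\varrho(\beta(x-y)).$$
With $\kappa:=\Vert k\Vert_{\mathcal{C}}^{2}\in[0,1)$ this is exactly the scalar contractive condition of the same shape, with ratio $\kappa<1$ and the same $\alpha>\beta$. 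Hence the hypotheses of the real-valued counterpart of Theorem \ref{shateri} — a Banach-type fixed point statement for ordinary modular spaces, in the spirit of Theorem \ref{khamsi} — are satisfied by $\varrho$ and $T$, so $T$ has a unique fixed point in $\mathcal{V}_{\varrho}=\mathcal{V}_{\rho}$, which is precisely the conclusion of Theorem \ref{shateri}. The converse implication is trivial: $\mathbf{K}$ is a commutative unital $C^{\ast}$-algebra, so a (finite-valued) ordinary modular is in particular a $C^{\ast}$-algebra-valued modular, and a scalar contraction with ratio $\kappa$ is $C^{\ast}$-algebra-valued contractive with $k=\sqrt{\kappa}\,e$; thus the scalar statement follows from Theorem \ref{shateri}, and the two are equivalent.

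The load-bearing point — the one to state with care — is the collapse of the $C^{\ast}$-contraction to a genuine scalar contraction with ratio strictly below $1$: it uses simultaneously that $k^{\ast}\rho(\cdot)k\in\mathcal{C}^{+}$ (so the order relation is between positive elements), monotonicity of the $C^{\ast}$-norm on $\mathcal{C}^{+}$ (Theorem \ref{theor1}(iii)), and the estimate $\Vert k^{\ast}ak\Vert_{\mathcal{C}}\le\Vert k\Vert_{\mathcal{C}}^{2}\Vert a\Vert_{\mathcal{C}}$, which is exactly where the \emph{norm} (not the spectral radius) of $k$ enters and forces $\kappa=\Vert k\Vert_{\mathcal{C}}^{2}$. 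A secondary, purely bookkeeping obstacle is to handle the definitions of $\rho$-convergence, $\rho$-Cauchyness and the $\Delta_{2}$-condition in \cite{Ref1} — all phrased through limits of the form $\rho(\lambda\,\cdot)\to\theta_{\mathcal{C}}$ — once and for all via ``$a_{n}\to\theta_{\mathcal{C}}$ iff $\Vert a_{n}\Vert_{\mathcal{C}}\to 0$'', so that $\mathcal{V}_{\rho}$ and $\mathcal{V}_{\varrho}$ genuinely carry the same convergence and completeness structure.
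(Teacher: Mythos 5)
Your proposal is correct and follows essentially the same route as the paper: you define the scalar modular $\varrho(x)=\Vert\rho(x)\Vert_{\mathcal{C}}$ (the paper's $F$), verify m1)--m3) identically, and collapse the $C^{\ast}$-contraction to a scalar one with ratio $\Vert k\Vert_{\mathcal{C}}^{2}<1$ via Theorem \ref{theor1}(ii)--(iii) and submultiplicativity, exactly as in the paper. Your additional bookkeeping (transfer of completeness, $\Delta_{2}$, and the converse implication) is a welcome completion of details the paper leaves implicit, but it does not change the argument.
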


\begin{proof}
From the Definition \ref{def3}, we know that there is $a\in \mathcal{C}$ with $\Vert a \Vert_{\mathcal{C}}<1$ and $\alpha
,\beta \in  
\mathbb{R}
^{+}$ with $\alpha >\beta $ such that  $\rho \left( \alpha \left( Tx-Ty\right) \right) \preceq a^{\ast }\rho
\left( \beta \left( x-y\right) \right) a$ for all $x,y\in \mathcal{V}$. Moreover, by (ii) in Theorem
\ref{theor1}, we see that there exists $u_{f}\in \mathcal{C}$ such
that $\rho \left( \beta \left( x-y\right) \right) =u_{f}^{\ast }u_{f}$. Hence $\left\Vert \rho \left( \beta \left( x-y\right) \right)
\right\Vert_{\mathcal{C}} =\left\Vert u_{f}^{\ast }u_{f}\right\Vert_{\mathcal{C}} =\left\Vert
u_{f}\right\Vert_{\mathcal{C}} ^{2}.$ 
On the other hand, since
\begin{equation*}
\rho \left( \alpha \left( Tx-Ty\right) \right) \preceq a^{\ast }\rho \left(
\beta \left( x-y\right) \right) a=a^{\ast }u_{f}^{\ast }u_{f}a=\left(
u_{f}a\right) ^{\ast }u_{f}a,
\end{equation*}
then by using (iii) in Theorem \ref{theor1} we obtain
\begin{flalign} \label{equ2.1}
\begin{aligned}
\left\Vert \rho \left( \alpha \left( Tx-Ty\right) \right) \right\Vert_{\mathcal{C}}
\preceq \left\Vert \left( u_{f}a\right) ^{\ast }u_{f}a\right\Vert_{\mathcal{C}}
=\left\Vert u_{f}a\right\Vert_{\mathcal{C}}^{2} & \\ \preceq \left\Vert a\right\Vert_{\mathcal{C}}
^{2}\left\Vert u_{f}\right\Vert_{\mathcal{C}}^{2}=\left\Vert a\right\Vert_{\mathcal{C}} ^{2}\left\Vert
\rho \left( \beta \left( x-y\right) \right) \right\Vert_{\mathcal{C}}. &
\end{aligned}
\end{flalign}
Now consider a mapping $F:\mathcal{V}_{\varrho }\rightarrow \left[
0,\infty \right] $ such as $F(x)=\Vert \rho(x)\Vert_{\mathcal{C}}$. Then $F$ is a usual modular. Indeed, 
\begin{itemize}
\item[i)] Let $F(x)=0$. Then $\Vert \rho(x)\Vert_{\mathcal{C}}=0$. Thus, by the property of norm, we get $\rho(x)=0$. Since $\rho$ is a modular, then we have $x=\theta _{\mathcal{V}}$.
\item[ii)] Let $\mu$ be a scalar with $\left\vert \mu \right\vert=1$. Then $F(\mu x)=\Vert \rho(\mu x)\Vert_{\mathcal{C}}=\Vert \rho(x)\Vert_{\mathcal{C}}=F(x)$.
\item[iii)] Let $\mu =1- \lambda$ for $\mu ,\lambda \geq 0$. Then, by using (iii) in Theorem \ref{theor1} and triangle inequality of the norm, we obtain $$F(\mu x+\lambda y)=\Vert \rho(\mu x+\lambda y)\Vert_{\mathcal{C}}\leq \Vert \rho(x)+\rho(y)\Vert_{\mathcal{C}}\leq \Vert \rho(x) \Vert_{\mathcal{C}}+\Vert \rho(y) \Vert_{\mathcal{C}}=F(x)+F(y).$$
\end{itemize}
By letting $k=\Vert a\Vert_{\mathcal{C}}^{2}$, we see that $k<1$. Thus, by (\ref{equ2.1}), we obtain
$$ F \left( \alpha \left( Tx-Ty\right) \right) \preceq k
F \left( \beta \left( x-y\right) \right).$$
Hence, BCP in $C^{*}$-algebra valued modular spaces is equivalent to one in the usual modular spaces.
\end{proof}

Now we introduce a proper space where we introduce a proper generalization for BCP in the usual modular space.
\begin{definition} \label{def1}
Let $\mathcal{V}$ be a vector space over $\mathbf{K}$. A mapping $\rho :\mathcal{V}\rightarrow \mathcal{A}$ is called a cone modular functional if it satisfies the followings:
\begin{itemize}
\item[cmf1] $\rho \left( u\right) \succeq \theta _{\mathcal{A}}$ and $\rho \left( u\right)
=\theta _{\mathcal{A}}$ if and only if $u=\theta _{\mathcal{V}}$.

\item[cmf2] $\rho \left( \alpha u\right) =\rho \left( u\right) $ for each $\alpha
\in \mathbf{K}$ with $\left\vert \alpha \right\vert =1$.

\item[cmf3] \label{cmf3} $\rho \left( \alpha u+\beta v\right) \preceq \rho \left( u\right) +\rho
\left( v\right) $ if $\alpha ,\beta \geq 0$ and $\alpha +\beta =1$.
\end{itemize}
for all $u,v\in \mathcal{V}.$ In addition to the conditions above, if $\rho$ satisfies $\rho \left( \alpha
x+\beta y\right) \preceq \alpha \rho \left( x\right) +\beta \rho \left(
y\right) $ whenever $\alpha ,\beta \geq 0$ and $\alpha =1-\beta$, then $\rho$ is called convex.
\end{definition} 
Now we need to point out that
$$\mathcal{V}_{\rho }=\left\{ x\in \mathcal{V}:\lim_{\lambda \rightarrow 0}\rho \left( \lambda
x\right) =\theta _{\mathcal{A}}\right\}$$ is a subspace of $\mathcal{V}$. Indeed,
\begin{itemize}
\item[i)] Let $x,y\in \mathcal{V}_{\rho }$. Then $\lim_{\lambda \rightarrow 0}\rho \left( \lambda
x\right) =\theta _{\mathcal{A}}$ and $\lim_{\lambda \rightarrow 0}\rho \left( \lambda
y\right) =\theta _{\mathcal{A}}$. By using (cmf3), $\rho\left( \lambda(x+y)\right) =\rho\left( \frac{1}{2}\left( 2\lambda x+2\lambda y\right) \right) \preceq
\rho\left( 2\lambda x\right) +\rho\left( 2\lambda y\right).$ Taking $t=2\lambda$, we see that $t\rightarrow 0$ as $\lambda \rightarrow 0$. So we get $\theta_{\mathcal{A}}\preceq \lim_{\lambda \rightarrow 0}\rho \left( \lambda
\left( x+y\right) \right)\preceq \theta _{\mathcal{A}}$. Thus, by the normality of cone, we can use Sandwich Theorem. Therefore, we have $\lim_{\lambda \rightarrow 0}\rho \left( \lambda
\left( x+y\right) \right)= \theta _{\mathcal{A}}$, implying that $x+y\in \mathcal{V}_{\rho }$.
\item[ii)] Take an arbitrary $\alpha \in \mathbf{K}$ and $x\in \mathcal{V}_{\rho }$. Then $\lim_{\lambda \rightarrow 0}\rho \left( \lambda
x\right) =\theta _{\mathcal{A}}$. Letting $\alpha \lambda = t$ we have $t\rightarrow 0$ as $\lambda \rightarrow 0$. Hence $\lim_{\lambda \rightarrow 0}\rho \left( \lambda
 \alpha x\right) =\theta _{\mathcal{A}}$. So $\alpha x \in \mathcal{V}_{\rho }$.
\end{itemize}
From now on, we call $\mathcal{V}_{\rho }$ a cone modular space over Banach algebra $\mathcal{A}$. Note that the cone modular space over $\mathcal{A}$ is a generalization of the usual modular space. In the sequel, we introduce some basic definitions.
Let us define a functional on $\mathcal{V}_{\rho }$ such that $\Vert x\Vert_{F}=inf\lbrace\delta>0:\Vert\rho(\frac{x}{\delta })\Vert_{\mathcal{A}}\leq\delta\rbrace$. Note that $\Vert.\Vert_{F}$ is an F-norm, that is, it satisfies the following conditions:

i) $\left\Vert x\right\Vert_{F} =0$ if and only if $x=\theta _{V}$.

ii) $\left\Vert x+y\right\Vert_{F} \leq \left\Vert x\right\Vert_{F} +\left\Vert
y\right\Vert_{F}$.

iii) $\left\Vert -x\right\Vert_{F} =\left\Vert x\right\Vert_{F}$.

iv) $\alpha _{n}\rightarrow \alpha $ and $\left\Vert x_{n}-x\right\Vert_{F}
\rightarrow 0$ imply $\left\Vert \alpha _{n}x-\alpha x\right\Vert_{F}
\rightarrow 0$.

\begin{definition} \label{defcomplet}
Let $\mathcal{V}_{\rho }$ be a cone modular space over $\mathcal{A}$ and $\left\{
x_{n}\right\} $ be in $\mathcal{V}_{\rho }.$ We say that

i) $\left\{ x_{n}\right\} $ is a $\rho $-convergent to $x\in \mathcal{V}_{\rho }$ denoted by $
x_{n}\rightarrow x$ $\left( n\rightarrow \infty \right) $ if for each $\varepsilon>0$ there is a natural number $N$ and $\mu >0$ such that $\Vert \rho \left(\mu\left( 
x_{n}-x\right) \right)\Vert_{\mathcal{A}}< \varepsilon $ for all $n\geq N$.

ii) $\left\{ x_{n}\right\} $ is a $\rho $-Cauchy if for each $\varepsilon>0$ there is a natural number $N$ and $\mu >0$ such that $\Vert\rho \left(\mu\left(
x_{n}-x_{m}\right)\right)\Vert_{\mathcal{A}}< \varepsilon$ for all $n,m\geq N$.

iii) $\mathcal{V}_{\rho }$ is $\rho $-complete if each $\rho $-Cauchy sequence with
respect to $\mathcal{A}$ is $\rho $-convergent.

iv) We say that $\rho$ satisfies $\Delta_2$-condition if for each $\varepsilon>0$ there is $n_0\in \mathbb{N}$ such that $\Vert\rho(2x_n)\Vert_{\mathcal{A}}< \varepsilon$ whenever $\Vert\rho(x_n)\Vert_{\mathcal{A}}< \varepsilon$ for $n\geq n_0$.
\end{definition}
\begin{remark}
Since $\Vert\rho\left(x\right) \Vert_{\mathcal{A}}\leq\Vert x\Vert_{F}$ for $\Vert x\Vert_{F}<1$, then the norm convergence implies modular convergence to the same limit.
\end{remark}
\begin{remark} \label{remark} 
If $0<\alpha <\beta ,$ then from Definition \ref{def1}, we have $\rho
\left( \alpha x\right) =\rho \left( \frac{\alpha }{\beta }\beta x\right)
\preceq \rho \left( \beta x\right) $ for all $x\in \mathcal{V}$ with $y=0.$
Furthermore, if $\rho $ is a convex cone modular on $\mathcal{V}$ and $\left\vert
\alpha \right\vert \leq 1,$ then $\rho \left( \alpha x\right) \preceq \alpha
\rho \left( x\right) $ for all $x\in \mathcal{V}.$
\end{remark}

In the sequel, we suppose that $\mathcal{V}_{\rho }$ is a cone modular space over Banach algebra $\mathcal{A}$. Now
we prove a fixed point theorem by introducing the notion of generalized contractive type mapping in the construction of cone modular spaces over Banach algebra.

\begin{definition} \label{mod}
A mapping $T:\mathcal{V}_{\rho }\rightarrow \mathcal{V}_{\rho }$ is called a cone contractive mapping on $\mathcal{V}_{\rho }$ if there exist a scalar vector $k\in P$ with $r\left( k\right) <1$ and $\alpha
,\beta \in 
\mathbb{R}
^{+}$ with $\alpha >\beta $ such that for all $x,y\in \mathcal{V}_{\rho }$
\begin{equation} \label{2.2}
\rho \left( \alpha \left( Tx-Ty\right) \right) \preceq k\rho \left( \beta
\left( x-y\right) \right).
\end{equation}
\end{definition}

\begin{theorem} \label{conethm}
Let $\mathcal{V}_{\rho }$ be a $\rho $-complete modular space with $\Delta_2$-condition and $T$ be a cone contractive mapping on $\mathcal{V}_{\rho }.$ Then $T$ has a
unique fixed point in $\mathcal{V}_{\rho }.$
\end{theorem}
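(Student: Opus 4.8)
The plan is to mimic the classical Banach iteration argument, but carried out at the level of the F-norm $\|\cdot\|_F$ rather than directly in the cone ordering. Fix $x_0 \in \mathcal{V}_\rho$ and set $x_{n+1} = Tx_n$. First I would iterate the contractive inequality \eqref{2.2}: since $\alpha > \beta$, Remark \ref{remark} gives $\rho(\beta(x_n - x_{n+1})) \preceq \rho(\alpha(x_n-x_{n+1}))$, so combining with \eqref{2.2} applied to the pair $(x_{n-1},x_n)$ yields $\rho(\alpha(x_n - x_{n+1})) \preceq k\,\rho(\beta(x_{n-1}-x_n)) \preceq k\,\rho(\alpha(x_{n-1}-x_n))$. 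Iterating, $\rho(\alpha(x_n - x_{n+1})) \preceq k^n \rho(\alpha(x_0 - x_1))$ in the ordering induced by $P$. Taking norms and using normality of $P$ (constant $L$) together with submultiplicativity, $\|\rho(\alpha(x_n-x_{n+1}))\|_{\mathcal{A}} \le L\|k^n\|_{\mathcal{A}}\,\|\rho(\alpha(x_0-x_1))\|_{\mathcal{A}}$, and by Lemma \ref{lem3} (spectral radius $r(k)<1$) we have $\|k^n\|_{\mathcal{A}}^{1/n} \to r(k) < 1$, so $\|k^n\|_{\mathcal{A}} \to 0$ geometrically. Hence $\|\rho(\alpha(x_n - x_{n+1}))\|_{\mathcal{A}} \to 0$.

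The next step is to upgrade this ``consecutive terms'' estimate to a genuine $\rho$-Cauchy statement in the sense of Definition \ref{defcomplet}(ii). Here is where the $\Delta_2$-condition and axiom (cmf3) do the work, exactly as in the classical modular-space proof: one needs to control $\rho(\mu(x_n - x_m))$ for some fixed $\mu>0$. Using convexity-free subadditivity (cmf3) one splits $x_n - x_m$ into a bounded number of consecutive differences — e.g. write $\mu(x_n-x_m)$ as a convex combination of scaled consecutive differences $2^j \mu(x_{n+i}-x_{n+i+1})$ and apply (cmf3) repeatedly — and then invokes the $\Delta_2$-condition to absorb the doubling factors: since $\|\rho(\alpha(x_n-x_{n+1}))\|_{\mathcal{A}} \to 0$, the $\Delta_2$-condition (iterated a fixed finite number of times) forces $\|\rho(2^j \alpha(x_n-x_{n+1}))\|_{\mathcal{A}} \to 0$ for each fixed $j$. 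Choosing $\mu$ appropriately (a small multiple of $\alpha$), summing the finitely many pieces, and using normality of $P$ to pass the cone inequalities to norm inequalities, we conclude that for every $\varepsilon>0$ there is $N$ with $\|\rho(\mu(x_n-x_m))\|_{\mathcal{A}} < \varepsilon$ for $n,m \ge N$; i.e. $\{x_n\}$ is $\rho$-Cauchy. By $\rho$-completeness it $\rho$-converges to some $x^\ast \in \mathcal{V}_\rho$.

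To see that $x^\ast$ is a fixed point, I would show $\|\rho(\gamma(Tx^\ast - x^\ast))\|_{\mathcal{A}} = 0$ for a suitable $\gamma$. Applying (cmf3) to split $\gamma(Tx^\ast - x^\ast)$ into $\rho$ of a multiple of $(Tx^\ast - Tx_n) = (Tx^\ast - x_{n+1})$ plus $\rho$ of a multiple of $(x_{n+1} - x^\ast)$: the first term is bounded via \eqref{2.2} and Remark \ref{remark} by $L\|k\|_{\mathcal{A}}\|\rho(\beta(x^\ast - x_n))\|_{\mathcal{A}}$ which tends to $0$ by $\rho$-convergence (after another $\Delta_2$ adjustment to match the scaling constants $\alpha,\beta$), and the second term tends to $0$ directly by $\rho$-convergence. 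Hence $\rho(\gamma(Tx^\ast - x^\ast)) = \theta_{\mathcal{A}}$, so by (cmf1) $Tx^\ast = x^\ast$. Uniqueness follows by applying \eqref{2.2} to two fixed points $x^\ast, y^\ast$: $\rho(\alpha(x^\ast - y^\ast)) = \rho(\alpha(Tx^\ast - Ty^\ast)) \preceq k\,\rho(\beta(x^\ast-y^\ast)) \preceq k\,\rho(\alpha(x^\ast-y^\ast))$, so $(e-k)\rho(\alpha(x^\ast-y^\ast)) \succeq \theta_{\mathcal{A}}$; since $r(k)<1$, $(e-k)^{-1} = \sum_{i\ge 0} k^i \in P$ exists by Lemma \ref{lem3}, and multiplying through (using $P^2 \subset P$) forces $\rho(\alpha(x^\ast - y^\ast)) = \theta_{\mathcal{A}}$, whence $x^\ast = y^\ast$.

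The main obstacle I anticipate is the second step: carefully organizing the telescoping/convex-combination decomposition of $x_n - x_m$ so that only a \emph{fixed} finite number of $\Delta_2$-doublings is needed uniformly in $n,m$, and keeping track of the normality constant $L$ as it accumulates across the finitely many applications of (cmf3). Getting the bookkeeping of the scaling constants $\alpha,\beta,\mu$ and the powers of $2$ to line up — so that the $\Delta_2$-condition as stated in Definition \ref{defcomplet}(iv) (which is phrased in terms of the norm $\|\rho(\cdot)\|_{\mathcal{A}}$, not the cone order) can actually be applied — is the delicate part; everything else is a routine transcription of the standard modular-space fixed point argument.
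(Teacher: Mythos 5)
Your first step (the estimate on consecutive differences), the fixed-point verification, and the uniqueness argument are all sound and essentially match the paper's. The problem is your second step, and the ``main obstacle'' you flag at the end is not a bookkeeping nuisance but a genuine gap: no decomposition of $x_n-x_m$ into consecutive differences uses a number of pieces bounded uniformly in $n,m$. If you telescope $\mu(x_n-x_m)=\sum_{i=m}^{n-1}\frac{1}{n-m}\,(n-m)\mu(x_{i+1}-x_i)$ you need $n-m$ applications of (cmf3) and a scaling factor $(n-m)\mu$ that requires roughly $\log_2(n-m)$ iterations of the $\Delta_2$-condition; if instead you split dyadically at midpoints, the recursion depth is again $\log_2(n-m)$. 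Either way the number of doublings grows without bound as $n-m\to\infty$, so ``iterating $\Delta_2$ a fixed finite number of times uniformly in $n,m$'' is simply not available, and the argument as written does not close.

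The paper closes this step by exploiting the strict inequality $\alpha>\beta$ rather than the $\Delta_2$-condition: choose $\alpha_0>0$ with $\frac{\beta}{\alpha}+\frac{1}{\alpha_0}=1$ and write $\beta(x_{n+1}-x_m)=\frac{\beta}{\alpha}\bigl(\alpha(x_{n+1}-x_{m+1})\bigr)+\frac{1}{\alpha_0}\bigl(\beta\alpha_0(x_{m+1}-x_m)\bigr)$, a convex combination of only \emph{two} pieces. Applying (cmf3) and then the contraction \eqref{2.2} to the first piece yields $\rho(\beta(x_{n+1}-x_m))\preceq k\,\rho(\beta(x_n-x_m))+k^{m}\rho(\beta\alpha_0(x_1-x_0))$, and recursing on the first summand (each peel-off costs one extra factor of $k$, not one extra doubling) gives $\rho(\beta(x_{n+1}-x_m))\preceq k^{m}(e+k+\cdots+k^{n-m})\rho(\beta\alpha_0(x_1-x_0))\preceq k^{m}(e-k)^{-1}\rho(\beta\alpha_0(x_1-x_0))$ by Lemma \ref{lem3}; normality of $P$ then forces $\Vert\rho(\beta(x_{n+1}-x_m))\Vert_{\mathcal{A}}\rightarrow 0$. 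The unbounded telescoping is thus traded for a convergent geometric series in $k$, and the $\Delta_2$-condition plays no role in the Cauchy estimate. You should replace your second step with this two-term splitting.
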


\begin{proof}
If $k=\theta _{\mathcal{A}} $ the proof is clear. Thus, we assume that $k\neq \theta _{\mathcal{A}}.$ Let $\alpha _{0}\in 
\mathbb{R}
^{+}$ be with $\frac{\beta }{\alpha }+\frac{1}{\alpha _{0}}=1.$ For an
arbitrary $x\in V_{\rho }$ and $n\in 
\mathbb{N}
,$ set $x_{n+1}=Tx_{n}=T^{n+1}x.$ Since $\alpha>\beta$, then using Remark \ref{remark} and Definition \ref{mod}, we get
\begin{eqnarray*}
\rho \left( \beta \left( x_{n+1}-x_{n}\right) \right)  &=&\rho \left( \beta
\left( Tx_{n}-Tx_{n-1}\right) \right)  \\
&\preceq &\rho \left( \alpha \left( Tx_{n}-Tx_{n-1}\right) \right)  \\
&\preceq &k\rho \left( \beta \left( x_{n}-x_{n-1}\right) \right)  \\
&=&k\rho \left( \beta \left( Tx_{n-1}-Tx_{n-2}\right) \right)  \\
&\preceq &k\rho \left( \alpha \left( Tx_{n-1}-Tx_{n-2}\right) \right)  \\
&\preceq &k^{2}\rho \left( \beta \left( x_{n-1}-x_{n-2}\right) \right)  \\
&&. \\
&&. \\
&&. \\
&\preceq &k^{n}\rho \left( \beta \left( x_{1}-x_{0}\right) \right) .
\end{eqnarray*}
Since $\frac{\beta }{\alpha }+\frac{1}{\alpha _{0}}=1$, then using (cmf3) we have
\begin{eqnarray*}
\rho \left( \beta \left( x_{n+1}-x_{n-1}\right) \right)  &=&\rho \left(
\beta \left( x_{n+1}-x_{n}+x_{n}-x_{n-1}\right) \right)  \\
&=&\rho \left( \beta \left( x_{n+1}-x_{n})+\beta (x_{n}-x_{n-1}\right)
\right)  \\
&=&\rho \left( \beta \frac{\alpha }{\alpha }\left( x_{n+1}-x_{n})+\beta 
\frac{\alpha _{0}}{\alpha _{0}}(x_{n}-x_{n-1}\right) \right)  \\
&\preceq &\rho \left( \alpha \left( x_{n+1}-x_{n}\right) \right) +\rho
\left( \beta \alpha _{0}\left( x_{n}-x_{n-1}\right) \right) .
\end{eqnarray*}
Now since $\alpha> \beta$, by using (\ref{2.2}), we obtain from the inequality given above
\begin{eqnarray*}
\rho \left( \beta \left( x_{n+1}-x_{n-1}\right) \right)&\preceq &k\rho \left( \beta \left( x_{n}-x_{n-1}\right) \right) +\rho
\left( \beta \alpha _{0}\left( x_{n}-x_{n-1}\right) \right).  
\end{eqnarray*}
By applying recursively the approach used above, we get
\begin{eqnarray*}
\rho \left( \beta \left( x_{n+1}-x_{n-1}\right) \right)
&\preceq &k^{n}\rho \left( \beta \alpha _{0}\left( x_{1}-x_{0}\right)
\right) +k^{n-1}\rho \left( \beta \alpha _{0}\left( x_{1}-x_{0}\right)
\right).
\end{eqnarray*}
Thus, for $n+1>m,$ we obtain following inequality
\begin{eqnarray*}
\rho \left( \beta \left( x_{n+1}-x_{m}\right) \right)  &\preceq &\rho \left(
\alpha \left( x_{n+1}-x_{m+1}\right) \right) +\rho \left( \beta \alpha
_{0}\left( x_{m+1}-x_{m}\right) \right)  \\
&\preceq &\rho \left( \alpha \left( x_{n+1}-x_{m+1}\right) \right)
+k^{m}\rho \left( \beta \alpha _{0}\left( x_{1}-x_{0}\right) \right)  \\
&=&\rho \left( \alpha \left( T_{n}-T_{m}\right) \right) +k^{m}\rho \left(
\beta \alpha _{0}\left( x_{1}-x_{0}\right) \right)  \\
&\preceq &k\rho \left( \beta \left( x_{n}-x_{m}\right) \right) +k^{m}\rho
\left( \beta \alpha _{0}\left( x_{1}-x_{0}\right) \right)  \\
&\preceq &k\left[ \rho \left( \alpha \left( x_{n}-x_{m+1}\right) \right)
+\rho \left( \beta \alpha _{0}\left( x_{m+1}-x_{m}\right) \right) \right] \\ 
&+&k^{m}\rho \left( \beta \alpha _{0}\left( x_{1}-x_{0}\right) \right)  \\
&\preceq &k\rho \left( \alpha \left( x_{n}-x_{m+1}\right) \right) 
+k.k^{m}\rho \left( \beta \alpha _{0}\left( x_{1}-x_{0}\right) \right) \\
&+&k^{m}\rho \left( \beta \alpha _{0}\left( x_{1}-x_{0}\right) \right)  \\
&\preceq &k^{2}\rho \left( \beta \left( x_{n-1}-x_{m}\right) \right) \\
&+&\left\{ k^{m+1}+k^{m}\right\} \rho \left( \beta \alpha _{0}\left(
x_{1}-x_{0}\right) \right)  \\
&\preceq &k^{3}\rho \left( \beta \left( x_{n-2}-x_{m}\right) \right) \\
&+&\left\{ k^{m+2}+k^{m+1}+k^{m}\right\} \rho \left( \beta \alpha _{0}\left(
x_{1}-x_{0}\right) \right). 
\end{eqnarray*}
By induction, we obtain
\begin{eqnarray*}
\rho \left( \beta \left( x_{n+1}-x_{m}\right) \right)  &\preceq
&k^{n-m+1}\rho \left( \beta \left( x_{m}-x_{m}\right) \right) \\
&+&\left\{
k^{m+n-m}+...+k^{m+1}+k^{m}\right\} \rho \left( \beta \alpha
_{0}\left( x_{1}-x_{0}\right) \right)  \\
&=&k^{m}\left( e+k+k^{2}+...+k^{n-m}\right) \rho \left( \beta \alpha
_{0}\left( x_{1}-x_{0}\right) \right).
\end{eqnarray*}
Since $r\left( k\right) <1$, then by Lemma \ref{lem3}, we obtain that $e-k$ is
invertible and $\left( e-k\right) ^{-1}=\sum\limits_{i=0}^{\infty }k^{i}.$ Thus,
\begin{eqnarray*} 
\rho \left( \beta \left( x_{n+1}-x_{m}\right) \right)& 
\preceq &k^{m}\left[ \sum\limits_{i=0}^{\infty }k^{i}\right] \rho \left(
\beta \alpha _{0}\left( x_{1}-x_{0}\right) \right)  \\
&=&k^{m}\left( e-k\right) ^{-1}\rho \left( \beta \alpha _{0}\left(
x_{1}-x_{0}\right) \right) .
\end{eqnarray*}
Since $P$ is a normal solid cone with a normal constant $L$ and $\Vert k^m\Vert_{\mathcal{A}}\rightarrow0$ $\left( m\rightarrow\infty\right) $. Thus for $\left(m\rightarrow\infty\right)$ we get, 
\begin{eqnarray*} 
\Vert\rho \left( \beta \left( x_{n+1}-x_{m}\right) \right)\Vert_{\mathcal{A}}& 
\leq L \Vert k^{m}\Vert_{\mathcal{A}}\Vert\left( e-k\right)^{-1}\Vert_{\mathcal{A}}\Vert\rho \left( \beta \alpha _{0}\left(
x_{1}-x_{0}\right) \right)\Vert_{\mathcal{A}}\rightarrow 0.
\end{eqnarray*}
Thus $\left\{ x_{n}\right\} $ is a $\rho $-Cauchy sequence. 
Since $V_{\rho }$ is a $\rho $-complete cone modular space over Banach algebra $\mathcal{A},$
there exist $x^*\in V_{\rho }$ and $\alpha>0$ such that $$\Vert\rho \left(\alpha\left(  x_{n}-x^* \right)\right)\Vert_{\mathcal{A}}=\Vert\rho
\left(\alpha\left(  Tx_{n-1}-x^*\right)\right)\Vert_{\mathcal{A}}<c.$$ 
Now, it remains to show that $x^*$ is a fixed point of $T.$ Indeed,
\begin{eqnarray*}
\rho \left( \frac{\alpha }{2}\left( Tx^*-x^*\right) \right)  &=&\rho \left( 
\frac{\alpha }{2}\left( Tx^*-T^{n+1}x\right) +\frac{\alpha }{2}\left(
T^{n+1}x-x^*\right) \right)  \\
&\preceq &\rho \left( \alpha \left( Tx^*-T^{n+1}x\right) \right) +\rho \left(
\alpha \left( T^{n+1}x-x^*\right) \right)  \\
&\preceq &k\rho \left( \beta \left( x^*-T^{n}x\right) \right) +\rho \left(
\alpha \left( T^{n+1}x-x^*\right) \right)  \\
&\preceq &k\rho \left( \alpha \left( x^*-T^{n}x\right) \right) +\rho \left(
\alpha \left( T^{n+1}x-x^*\right) \right) .
\end{eqnarray*} 
So,
\begin{align*}
\Vert\rho \left( \frac{\alpha }{2}\left( Tx^*-x^*\right) \right)\Vert_{\mathcal{A}}\leq L\left(\Vert k\Vert_{\mathcal{A}}\Vert\rho \left( \alpha \left( x^*-T^{n}x\right) \right)\Vert_{\mathcal{A}} +\Vert\rho \left(
\alpha \left( T^{n+1}x-x^*\right) \right)\Vert_{\mathcal{A}}\right).
\end{align*}
For $(n\rightarrow\infty)$, $L\left(\Vert k\Vert_{\mathcal{A}}\Vert\rho \left( \alpha \left( x^*-T^{n}x\right) \right)\Vert_{\mathcal{A}} +\Vert\rho \left(
\alpha \left( T^{n+1}x-x^*\right) \right)\Vert_{\mathcal{A}}\right)\rightarrow 0$. Thus we have $\Vert\rho \left( \frac{\alpha }{2}\left( Tx^*-x^*\right) \right)\Vert_{\mathcal{A}}
=0.$ Therefore $Tx^*=x^*.$ Now assume that $y^*\left( \neq x^*\right) $
be another fixed point of $T.$ Then we get
\begin{eqnarray*}
\rho \left( \beta \left( x^*-y^*\right) \right)  &=&\rho \left( \beta \left(
Tx^*-Ty^*\right) \right)  \\
&\preceq &\rho \left( \alpha \left( Tx^*-Ty^*\right) \right)  \\
&\preceq &k\rho \left( \beta \left( x^*-y^*\right) \right)  \\
&\preceq &k^{2}\rho \left( \beta \left( x^*-y^*\right) \right)  \\
&&. \\
&&. \\
&\preceq &k^{n}\rho \left( \beta \left( x^*-y^*\right) \right) .
\end{eqnarray*}
Since
\begin{eqnarray*}
\Vert\rho \left( \beta \left( x^*-y^*\right) \right)\Vert_{\mathcal{A}}\leq L\Vert k^{n}\Vert_{\mathcal{A}}\Vert\rho \left( \beta \left( x^*-y^*\right) \right)\Vert_{\mathcal{A}}\rightarrow 0
\end{eqnarray*}
while $n\rightarrow\infty$, then we have $\rho \left( \beta \left( x^*-y^*\right) \right) =\theta _{\mathcal{A}}$ and so $x^*=y^*.$ Hence the fixed point is unique.
\end{proof}

Now we present an example to show that our result provides a real generalization of the fixed point theory in the modular spaces: 

\begin{example}
Let $\mathcal{A}=
\mathbb{R}
^{2}.$ For each $\left( b_{1},b_{2}\right) \in \mathcal{A},$ $\left\Vert \left(
b_{1},b_{2}\right) \right\Vert_{\mathcal{A}} =\left\vert b_{1}\right\vert +\left\vert
b_{2}\right\vert .$ The multiplication is defined as $ba=\left(
b_{1},b_{2}\right) \left( a_{1},a_{2}\right) =\left(
b_{1}a_{1},b_{1}a_{2}+b_{2}a_{1}\right) .$ Then it is obvious that $\mathcal{A}$ is a
Banach algebra with unit $e=\left( 1,0\right).$ Let 
\begin{equation*}
P=\left\{ \left(
b_{1},b_{2}\right) \in 
\mathbb{R}
^{2}:b_{1},b_{2}\geq 0\right\}.
\end{equation*}
Thus $P$ is a normal solid cone with a constant $L=1$. Let $V=\mathbb{R}^{2}$ and the cone modular $\rho $ be defined by $\rho \left(
b\right) =\rho \left( \left( b_{1},b_{2}\right) \right) =\left( \left\vert
b_{1}\right\vert ,\left\vert b_{2}\right\vert \right).$ So, $\rho \left(
b\right) \in P.$ Then $V_{\rho }=\left\{ b\in
V:\lim_{\lambda \rightarrow 0}\rho \left( \lambda b\right) =\theta
_{\mathcal{A}}\right\} $ is a $\rho $-complete cone modular space over $\mathcal{A}.$ We define the
mapping $T:V_{\rho }\rightarrow V_{\rho }$ by 
\begin{equation*}
T\left( b\right) =T\left( \left( b_{1},b_{2}\right) \right) =\left( \log
\left( 4+\left\vert b_{1}\right\vert \right) ,\arctan \left( 3+\left\vert
b_{2}\right\vert \right) +\lambda b_{1}\right) ,
\end{equation*}
where $\lambda $ can be any large positive real number. By Lagrange mean value theorem we get
\begin{eqnarray*}
\rho \left( \alpha \left( T\left( b_{1},b_{2}\right) -T\left(
a_{1},a_{2}\right) \right) \right)  &\preceq &\left( \frac{\alpha }{4}
\left\vert b_{1}-a_{1}\right\vert ,\frac{\alpha }{10}\left\vert
b_{2}-a_{2}\right\vert +\lambda \left( b_{1}-a_{1}\right) \right)  \\
&\preceq &\left( \frac{1}{2},\lambda \right) \rho \left( \frac{\alpha }{2}
\left( \left( b_{1},b_{2}\right) -\left( a_{1},a_{2}\right) \right) \right).
\end{eqnarray*}
Since $r\left( \left( \frac{1}{2},\lambda \right) \right)
=\lim_{n\rightarrow \infty }\left\Vert \left( \frac{1}{2},\lambda \right)
^{n}\right\Vert_{\mathcal{A}} ^{\frac{1}{n}}=\frac{1}{2}<1$, then by Theorem \ref{conethm}, $T$ has a
unique fixed point theorem in $\mathcal{A}$. Now we show that $T$ is not a contraction in the setting of usual modular spaces. Indeed, we first let
 $\rho ^{\ast }=\xi _{c}\circ \rho $ where $c\in intP$ and $\xi _{c}:\mathcal{A}\rightarrow 
\mathbb{R}$ is the nonlinear scalarization
function defined by $\xi _{c}\left( b\right) =\inf
\left\{ t\in 
\mathbb{R}
:b\in tc-P\right\}=\inf\lbrace t\in \mathbb{R} : b\leq tc\rbrace$ (see \cite{Ref19}). Therefore, since $intP=\left\{ \left( c_{1},c_{2}\right) \in 
\mathbb{R}
^{2}:c_{1},c_{2}> 0\right\}$, we have  
\begin{equation*}
\xi _{c}\left( b\right) =\xi _{c}\left( \left( b_{1},b_{2}\right) \right)
=\inf \left\{ t\in 
\mathbb{R}
:\left( b_{1},b_{2}\right) \leq t\left( c_{1},c_{2}\right) \right\} =\max
\left\{ \frac{b_{1}}{c_{1}},\frac{b_{2}}{c_{2}}\right\} 
\end{equation*}
for $c=\left(
c_{1},c_{2}\right) \in intP$ and $b=\left( b_{1},b_{2}\right) \in \mathcal{A}$. Thus,
\begin{equation*}
\rho ^{\ast }\left( a\right) =\left( \xi _{c}\circ
\rho \right) \left( a_{1},a_{2}\right) =\max \left\{ \frac{\left\vert
a_{1}\right\vert }{c_{1}},\frac{\left\vert a_{2}\right\vert }{
c_{2}}\right\}
\end{equation*}
for $a,b\in V$.

 Now let $\alpha \succ \frac{c_{2}}{c_{1}}$ and consider $
a=\left( 1,0\right) ,b=\left( 0,0\right) .$ We have
\begin{equation*}
\rho ^{\ast }\left( Ta-Tb\right) =\max \left\{ \frac{\log 5-\log 4}{c_{1}},
\frac{\alpha }{c_{2}}\right\} \succeq \frac{\alpha }{c_{2}}\succ \frac{1}{
c_{1}}=\rho ^{\ast }\left( a-b\right),
\end{equation*}
implying that $T$ is not a contraction in the setting of modular space $V_{\rho^*}.$
\end{example}

\end{document}